\newtheorem{theorem}{Theorem}[section]
\newtheorem{lemma}[theorem]{Lemma}
\theoremstyle{definition}
\newtheorem{definition}[theorem]{Definition}
\newtheorem{example}[theorem]{Example}
\theoremstyle{remark}
\newtheorem{remark}[theorem]{Remark}
\numberwithin{equation}{section}
\begin{document}

\title[Extrinsic Ricci Flow on Surfaces of Revolution]{Extrinsic Representation of Ricci Flow on Surfaces of Revolution}
\author{Vincent E.\ Coll Jr., Jeff Dodd, and David L. Johnson}
\date{November 1, 2013}

\begin{abstract}
An extrinsic representation of a Ricci flow on a differentiable $n$-manifold
$M$ is a family of
submanifolds $S(t)$, each smoothly embedded in $\mathbb{R}^{n+k}$,
evolving as a function of time $t$ such that the metrics induced on the submanifolds
$S(t)$ by the ambient Euclidean metric yield
the Ricci flow on $M$.  When does such a representation exist?

We formulate this question precisely and describe
a new, comprehensive way of addressing it for surfaces of revolution in $\mathbb{R}^3$.  Our approach is to
build the desired embedded surfaces of revolution $S(t)$ in $\mathbb{R}^3$ into the flow at the outset
by rewriting the Ricci flow equations in terms of extrinsic geometric quantities in a
natural way.  This identifies an extrinsic representation with a
particular solution of the scalar logarithmic diffusion equation in one space variable.
The result is a single, unified framework to construct an extrinsic representation in $\mathbb{R}^3$  of a Ricci flow
on a surface of revolution $S$ initialized by a metric $g_0$.

Of special interest is the Ricci flow on the torus $S^1 \times S^1$  embedded in $\mathbb{R}^3$.  In this case,
the extrinsic representation of the Ricci flow on a Riemannian cover of $S$ is eternal.  This flow can also be realized as a compact family of nonsmooth, but isometric, embeddings of the torus into
$\mathbb{R}^3$.
\end{abstract}

\maketitle

\section{Introduction}

Ricci flow is an intrinsic geometric flow: a metric $g(t)$ evolving as
a function of time $t$ on a fixed differentiable manifold $M$ according to the Ricci flow equation $\partial_t g_{ij}(t) = 2R_{ij}(t)$.
\begin{definition} \label{extrinsicrepresentation}
\mbox{}
\begin{itemize}[leftmargin=20pt]
\item[(a)] A {\em local extrinsic representation} of a Ricci flow $(M, g(t))$ on a
differentiable $n$-manifold $M$ in $\mathbb{R}^{n+k}$ consists of a family of $n$-dimensional submanifolds $S(t)$ that are smoothly embedded in $\mathbb{R}^{n+k}$
together with corresponding local isometries $i(t): (S(t), g_E(t)) \rightarrow (M, g(t))$, where $g_E(t)$
denotes the metric induced on $S(t)$ by the ambient Euclidean metric on $\mathbb{R}^{n+k}$.
\item[(b)] A {\em global extrinsic representation} of a Ricci flow $(M, g(t))$ on a differentiable $n$-manifold $M$
is a local extrinsic representation such that the local isometries $i(t)$ are global isometries.
\end{itemize}
\end{definition}
J.\ H.\ Rubinstein and R.\ Sinclair~\cite{RubinsteinandSinclair2005} have shown that on the sphere $S^n$, for $n \geq 2$, there exists
a global extrinsic representation in $\mathbb{R}^{n + 1}$  of any Ricci flow that is initialized by a metric $g_0$ such that $(S^n, g_0)$
can be isometrically embedded in $\mathbb{R}^{n + 1}$ as a hypersurface of revolution.  J.\ C.\ Taft~\cite{Taft2010} has shown that steady and expanding
Ricci solitons in all dimensions admit global extrinsic representations.
Here we present a new point of view:  a complete analytic model for the problem of constructing extrinsic representations of Ricci
flows on surfaces of revolution in $\mathbb{R}^3$.

We consider Ricci flows initialized by surfaces of revolution that are smoothly immersed in $\mathbb{R}^3$, connected, complete, and
without boundary, and we treat these flows differently depending upon whether or not the initial surface of revolution is {\em toroidal}, that is,
generated by a closed profile curve that does not intersect the axis of revolution.
In particular, we show that any global extrinsic representation in $\mathbb{R}^3$ of a Ricci flow initialized by a non-toroidal surface
of revolution can be reformulated as a solution of the scalar logarithmic diffusion equation in one space variable satisfying
certain conditions, and we illustrate this new point of view by applying it to the extrinsic representations constructed by Rubinstein and Sinclair for
Ricci flows on the $2$-sphere.
We also find that, conversely, a global extrinsic representation in $\mathbb{R}^3$ of a Ricci flow initialized by a non-toroidal surface of revolution
can be constructed if there exists a solution of the scalar logarithmic diffusion equation in one space variable meeting certain conditions.  We
demonstrate this new method of construction by producing new extrinsic representations of Ricci flows initialized by a large class of
non-toroidal surfaces of revolution.

Finally, we modify this construction to produce a local
extrinsic representation of any Ricci flow initialized by a smoothly immersed toroidal surface of revolution.  Such a flow becomes a global extrinsic representation in $\mathbb{R}^3$ on a cylindrical Riemannian cover of the torus initialized by an isometric embedding in $\mathbb{R}^3$ as a periodic surface of revolution.  This flow can also be realized as a non-smooth but isometrically-embedded family of tori.  As far as we know, these are the first extrinsic representations of Ricci flows on the torus.

\section{Global Extrinsic Representations of Ricci Flows Initialized by Non-toroidal Surfaces of Revolution}

On $2$-manifolds, Ricci flow is conformal; that is, each metric $g_t$ is conformally equivalent to the initial metric $g_0$~\cite{Brendle2006}.  We begin with an observation that applies not only to extrinsic representations
of Ricci flows on surfaces of revolution in $\mathbb{R}^3$, but more generally to all conformal extrinsic flows of surfaces of revolution.

\begin{definition} A point $P$ of a surface of revolution $S$ is a {\em pole} if $P$ is an intersection point of $S$ with the axis of revolution.
\end{definition}

\begin{lemma}[Reduction of Conformal Extrinsic Flow to Scalar Flow in Isothermal Coordinates] \label{isothermalcoordinates}
Let $S_0$ be a surface of revolution that is connected, complete, and without boundary, and smoothly immersed in $\mathbb{R}^3$ by the
parametrization
\begin{align} \label{initialsurface}
  x &= f_0(v) \cos \theta  \\
  y &= f_0(v) \sin \theta  \notag \\
  z &= h_0(v) \notag
\end{align}
where $v \in I$ for some interval $I \subseteqq \mathbb{R}$, $f_0(v) \geq 0$ for all $v \in I$, and $\theta \in [0,2\pi]$.
Consider a conformal extrinsic flow of smooth immersed surfaces $S(t)$ parameterized for $t \geq 0$ by:
\begin{align} \label{extrinsicflow}
  x &= f(v,t) \cos \theta   \\
  y &= f(v,t) \sin \theta  \notag   \\
  z &= h(v,t) \notag
\end{align}
where $f$ and $h$ are smooth functions such that $f(v,0) = f_0(v)$ and $h(v,0) = h_0(v)$.
Without loss of generality, assume that, at a pole, where $f_0(p) = 0$, then $f(p,t) = 0$ for $t > 0$.

For $t \geq 0$, reparameterize $S(t)$ with coordinates $(\xi, \theta)$ where
\begin{equation} \label{xidefinition}
\xi (v) = \int_{q}^v \frac{ \sqrt{f_0^{\prime}(s)^2 + h_0^{\prime}(s)^2}} {f_0(s)} \mbox{ d}s
\end{equation}
for some $q \in I$ such that $f_0(q) \neq 0$.  Then the reparameterized extrinsic geometric flow
\begin{align} \label{reparameterizedflow}
  x &= f(\xi,t) \cos \theta   \\
  y &= f(\xi,t) \sin \theta  \notag   \\
  z &= h(\xi,t) \notag
\end{align}
satisfies
\begin{equation}
f_{\xi}^2 + h_{\xi}^2 = f^2  \label{arclengthcondition}
\end{equation}
and the corresponding reparameterized metric flow induced by the ambient Euclidean metric is given by
\begin{equation} \label{isothermalmetricflow}
g_E(\xi,t) =  f(\xi,t)^2 \left[ {\rm d}\xi^2 + {\rm d}\theta^2 \right] .
\end{equation}
A pole location $v = p$ for the flow corresponds to $\xi = -\infty$ or $\xi = \infty$.
The map from $I \rightarrow \mathbb{R}$ given by $v \mapsto \xi(v)$ has a range of $(-\infty, \infty)$.
\end{lemma}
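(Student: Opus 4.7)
The plan is to do the change of variable $v \mapsto \xi$ first in the $(v,\theta)$ chart, where the induced metric is easy to write down, and to let the conformality hypothesis do the real work. Differentiating (\ref{extrinsicflow}) gives
\begin{equation*}
g_E(v,\theta,t) = (f_v^2 + h_v^2)\,dv^2 + f^2\,d\theta^2.
\end{equation*}
Since the flow is conformal and rotationally symmetric, there is a positive factor $\lambda(v,t)$ with $g_E(t) = \lambda\, g_E(0)$. Comparing $d\theta^2$ components forces $\lambda = (f/f_0)^2$; then comparing $dv^2$ components produces the identity
\begin{equation*}
\frac{\sqrt{f_v(v,t)^2 + h_v(v,t)^2}}{f(v,t)} \;=\; \frac{\sqrt{f_0'(v)^2 + h_0'(v)^2}}{f_0(v)},
\end{equation*}
valid at every $v \in I$ with $f_0(v) \neq 0$ and every $t \geq 0$. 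This one identity will drive the rest of the argument.

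With $\xi$ given by (\ref{xidefinition}), one has $\xi_v = \sqrt{f_0'(v)^2 + h_0'(v)^2}/f_0(v)$, and the chain rule gives $f_v = f_\xi\,\xi_v$ and $h_v = h_\xi\,\xi_v$. Squaring the key identity in the form $\sqrt{f_v^2 + h_v^2} = f\,\xi_v$ and substituting the chain-rule expressions yields $(f_\xi^2 + h_\xi^2)\,\xi_v^2 = f^2\,\xi_v^2$, hence $f_\xi^2 + h_\xi^2 = f^2$, which is (\ref{arclengthcondition}). Substituting this back into $g_E(t)$ now written in $(\xi,\theta)$ coordinates produces the isothermal form (\ref{isothermalmetricflow}).

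It remains to verify that $v \mapsto \xi$ is onto $\mathbb{R}$ and that each pole maps to $\pm\infty$. At a pole $v = p$, smoothness of $S_0$ as an embedded surface in $\mathbb{R}^3$ forces the profile curve to meet the axis transversely, so $f_0$ has a simple zero at $p$; the integrand of (\ref{xidefinition}) then behaves like a positive constant times $1/|v - p|$ and $\xi(v) \to \pm\infty$ logarithmically, which is the pole correspondence. At a non-pole endpoint of $I$, completeness of $(S_0, g_E(0))$ forces the meridian arclength to diverge on approach; combining this with the pointwise bound $\sqrt{f_0'(v)^2 + h_0'(v)^2}/f_0(v) \geq |f_0'(v)/f_0(v)| = |(\log f_0)'(v)|$ and a short case analysis on whether $f_0$ remains bounded, tends to $0$, or blows up near the endpoint gives $\xi \to \pm\infty$ in each case.

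The main obstacle is the pole analysis: the parametrization $(v,\theta) \mapsto (x,y,z)$ is itself singular at axial points, so concluding $f_0'(p) \neq 0$ from the clean hypothesis \emph{smoothly immersed in $\mathbb{R}^3$} requires passing through a regular chart for the embedded surface near the pole and ruling out higher-order tangencies that would leave the integrand too mild to diverge. Once transversality is in hand, everything else is a brief chain-rule computation powered by the single conformality identity established above.
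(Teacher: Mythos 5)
Your argument follows the same route as the paper's own proof: write the induced metric in the $(v,\theta)$ chart, use conformality to show that the ratio $\sqrt{f_v^2+h_v^2}\,/f$ is independent of $t$ (so the single time-independent substitution $v\mapsto\xi$ renders every $S(t)$ in isothermal coordinates), and then obtain \eqref{arclengthcondition} and \eqref{isothermalmetricflow} by the chain rule. That core is complete and correct, and your derivation of the conformal factor $\lambda=(f/f_0)^2$ from the $d\theta^2$ component is exactly the mechanism the paper uses.

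The one genuine hole is the one you flag yourself: the assertion that smoothness at a pole forces $f_0$ to have a simple zero at $v=p$. You state the conclusion and defer its proof to an unspecified regular-chart argument, but this is the only nontrivial analytic input in the lemma, and your stated mechanism for divergence (integrand $\sim C/|v-p|$) depends on it. The paper closes this by observing that smoothness of the surface at a pole forces $f_0$ to extend to an \emph{odd} function of $v-p$, and then invoking Whitney's theorem to write $f_0(v)=(v-p)F_0\left((v-p)^2\right)$ with $F_0(0)\neq 0$; this structure is also what is needed later, in the proof of Theorem~\ref{representationtheorem}, to evaluate the limit of $f_\xi/f$ at a pole. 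A more elementary way to finish your version: at a smooth pole the tangent plane is perpendicular to the axis, so $h_0'(p)=0$, and immersedness of the profile curve then forces $f_0'(p)\neq 0$. (Note also that for the divergence of $\xi$ alone the simple zero is not strictly needed --- any zero of $f_0$ with the profile speed bounded below already makes the integral diverge --- but the simple zero is what the smoothness hypothesis actually delivers and what the rest of the paper relies on.) On the other hand, your treatment of the non-pole ends of $I$, via completeness together with the pointwise bound by $\left|(\log f_0)'\right|$ and a case analysis on the behavior of $f_0$, is more explicit than the paper's, which dismisses the range statement as a straightforward consequence; your version is the one that actually covers the case of unbounded $f_0$, where the naive arclength estimate fails.
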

\begin{proof}
The metric on $S(t)$ induced by the ambient Euclidean metric is:
\begin{equation} \nonumber
g_E(v,t) = (f_v^2 + h_v^2) {\rm d}v^2 + f^2 {\rm d}\theta^2.
\end{equation}
At any time $t$, isothermal coordinates on $S(t)$ are given by
$(\xi, \theta)$ where $\xi$ and $v$ are related by the differential equation
\begin{equation} \nonumber
\frac{{\rm d} v}{{\rm d} \xi} = \frac{f}{\sqrt{f_v^2 + h_v^2}} .
\end{equation}
But because the flow is conformal, we have for all $t > 0$ that
\begin{equation} \nonumber
\frac{f(v,t)}{\sqrt{f_v(v,t)^2 + h_v(v,t)^2}} = \frac{f_0(v)}{\sqrt{f_0^{\prime}(v)^2 + h_0^{\prime}(v)^2}} .
\end{equation}
Thus the reparameterization \eqref{xidefinition} renders $S(t)$ in isothermal coordinates for all $t \geq 0$.
In these coordinates, the metric flow is given by \eqref{isothermalmetricflow}, and comparing arclength elements in
\eqref{reparameterizedflow} and \eqref{isothermalmetricflow} leads to \eqref{arclengthcondition}.

If $S_0$ has a pole at $v = p$, then  $f_0$ must extend to an odd function of $(v-p)$ in a neighborhood of the pole, and it follows by a theorem of H.\ Whitney ~\cite{Whitney1943} that, if the embedding is of $C^{2k + 1}$ smoothness at the pole, then  $f_0$ can be written as
\begin{equation} \label{polestructure}
f_0(v) = (v - p)F_0 \left( (v - p)^2 \right)
\end{equation}
where $F_0(0) \neq 0$ and $F_0$ is of $C^k$ smoothness at $0$.  It follows that
\begin{equation} \nonumber
\lim_{v \rightarrow p} \xi(v) = \int_{q}^p \frac{ \sqrt{f_0^{\prime}(s)^2 + h_0^{\prime}(s)^2}} {f_0(s)} \mbox{ d}s =
\left\{
  \begin{array}{rl}
    -\infty, & \hbox{if $p < q$;} \\
    \infty, & \hbox{if $p > q$.}
  \end{array}
\right.
\end{equation}

When $S_0$ is non-toroidal, the statement regarding the range of $\xi$ is a straightforward consequence of
its definition \eqref{xidefinition} and the behavior of $\xi$ at a pole.
When $S_0$ is toroidal, we will take $f_0(v)$ and $h_0(v)$ to be smooth periodic functions defined for $-\infty < v < \infty$
with a common period in $v$ such that $f_0(v) > 0$ for all $v$, so that $f_0(\xi)$ and $h_0(\xi)$ are smooth
periodic functions defined for $-\infty < \xi < \infty$ with a common period in $\xi$ such that $f_0(\xi) > 0$ for all $\xi$.
\end{proof}
\begin{definition}
We refer to  the special coordinates $(\xi, \theta)$ constructed for the conformal extrinsic metric flow \eqref{extrinsicflow} in
Lemma~\ref{isothermalcoordinates} as {\em time-independent isothermal coordinates}.
\end{definition}
On a two-dimensional Riemannian manifold $M$, Ricci flow takes the form of the initial value problem
\begin{align} \label{RicciFlow}
\partial_t g(t) &=  -2 K(t) g(t), \quad \quad 0 < t < T  \\
g(0) &=  g_0  \notag
\end{align}
where $g(t)$ is the metric on $M$ at time $t$ and $K(t)$ is the Gaussian curvature associated with the metric $g(t)$.
It is well known (see the survey by J.\ Isenberg, R.\ Mazzeo, and N.\ Sesum  ~\cite{Isenberg2011}) that
\eqref{RicciFlow} can be reformulated as a scalar logarithmic diffusion equation.  In particular, if $g_0^{\ast}$
is any metric in the conformal class of the initial metric $g_0$, then \eqref{RicciFlow} is equivalent to the scalar flow
$g(t) = u(t) g_0^{\ast}$ given by
\begin{align} \label{abstractlogdiffusion}
\partial_t u &=  \triangle_{g_0^{\ast}} \left[ \log u \right] - 2 K_{g_0^{\ast}}, \quad \quad 0 < t < T   \\
g_0 &=  u(0) g_0^{\ast} \notag
\end{align}
where $\triangle_{g_0^{\ast}}$ is the Laplace-Beltrami operator and $K_{g_0^{\ast}}$ the Gaussian curvature associated
with the metric $g_0^{\ast}$.

Let $S_0$ be a surface of revolution smoothly immersed in $\mathbb{R}^3$ by the parameterization \eqref{initialsurface},
and let $(\xi,\theta)$ be isothermal coordinates on $S_0$ as in \eqref{xidefinition}.  Writing \eqref{abstractlogdiffusion} for
$g_0^{\ast} = {\rm d}\xi^2 + {\rm d}\theta^2$, we have
$K_0^{\ast} = 0$ and $\triangle_{g_0^{\ast}} = \partial^2 / \partial \xi^2 + \partial^2 / \partial^2 \theta^2$.
So intrinsically, a Ricci flow initialized by $S_0$ is given by
\begin{equation} \label{intrinsicmetricflow}
 g(t) = u(\xi,t) \left[ {\rm d}\xi^2 + {\rm d}\theta^2 \right]
\end{equation}
where $u$ satisfies the logarithmic diffusion equation in one space variable:
\begin{align} \label{intrinsiclogdiffusion}
 u_t(\xi,t) &= \left[ \log u(\xi,t) \right]_{{\xi}{\xi}} & & \mbox{for $\xi$ in $\mathbb{R}$, $0 < t < T$}  \\
 u(\xi,0) &= u_0(\xi)  & & \mbox{for $\xi$ in $\mathbb{R}$} \notag
\end{align}
with $u_0(\xi) = f_0(\xi)^2$.

To characterize global extrinsic representations in $\mathbb{R}^3$ of Ricci flows initialized by non-toroidal surfaces of revolution,
we apply Lemma~\ref{isothermalcoordinates} to the intrinsic formulation \eqref{intrinsiclogdiffusion}.
\begin{theorem}  \label{representationtheorem}
Let $S_0$ be a surface of revolution smoothly immersed in $\mathbb{R}^3$ by the parameterization \eqref{initialsurface} that is connected,
complete, without boundary, and non-toroidal.  Let $(\xi,\theta)$ be isothermal coordinates on $S_0$ as in \eqref{xidefinition}.
\begin{itemize}[leftmargin=20pt]
\item[(a)] Suppose that a Ricci flow $(S_0, g(t))$ initialized by $S_0$ has a global extrinsic representation in $\mathbb{R}^3$  for $0 \leq t < T$.
Then the each embedded surface $S(t)$ in this representation can be parameterized by \eqref{reparameterizedflow}
where the function $u(\xi,t) = f(\xi,t)^2$ is a smooth positive solution of the initial value problem \eqref{intrinsiclogdiffusion}
such that for each $t \in [0,T)$ the following conditions hold:
\begin{itemize}
\item[(1)] (embeddability of the metric) $\displaystyle{\underset{\xi \in \mathbb{R}}{\sup} \left| \dfrac{f_{\xi}}{f} \right| \leq 1}$, and
\item[(2)] (smoothness at poles) at a pole location $v = p$ of $S_0$ where $\xi \rightarrow -\infty$ and $v \rightarrow p^+$,
$f_{\xi}/f \rightarrow 1$ as $\xi \rightarrow -\infty$, and at a pole location
$v = p$ of $S_0$ where $\xi \rightarrow \infty$ and $v \rightarrow p^-$,
$f_{\xi}/f \rightarrow -1$ as $\xi \rightarrow \infty$.
\end{itemize}
\item[(b)] Conversely, suppose that there is a smooth positive solution $u(\xi,t)$ of
the initial value problem \eqref{intrinsiclogdiffusion} with $u_0(\xi) = f_0(\xi)^2$ such that for $f(\xi,t) = \sqrt{u(\xi,t)}$, conditions~(1) and~(2)
hold for each $t \in [0,T)$.  Then there exists a Ricci flow $(S_0, g(t))$ initialized by $S_0$ that has a global extrinsic representation in $\mathbb{R}^3$ for $0 \leq t < T$
comprised of embedded surfaces $\widehat{S}(t)$ parameterized by
\begin{align} \label{constructedextrinsicflow}
  x &= f(\xi,t) \cos \theta   \\
  y &= f(\xi,t) \sin \theta  \notag   \\
  z &= \widehat{h}(\xi,t) \notag
\end{align}
where $\widehat{h}(\xi,t) = \int \sqrt{f(\xi,t)^2 - f_{\xi}(\xi,t)^2} \mbox{ d}\xi$ for $\xi \in \mathbb{R} $ and $\theta \in [0,2\pi]$.
The surfaces $\widehat{S}(t)$ are of at least $C^1$ smoothness.  For each $t \in [0,T)$, $\widehat{h}(\xi,t)$ is a non-decreasing
function of $\xi$.  In particular, if $h_0$ is not monotone, then $\widehat{h}(\xi,0) \neq h_0(\xi)$, and $\widehat{S}(0) \neq S_0$.
\end{itemize}
\end{theorem}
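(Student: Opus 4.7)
My plan for part~(a) is to apply Lemma~\ref{isothermalcoordinates} to the given representation and then read off the two conditions from the resulting formulas. Concretely, the lemma reparametrizes each $S(t)$ in time-independent isothermal coordinates so that its induced metric is $f(\xi,t)^2[{\rm d}\xi^2+{\rm d}\theta^2]$ and $f_\xi^2+h_\xi^2 = f^2$. The global isometries $i(t)$ identify this induced metric with the intrinsic Ricci flow metric $g(t)$, which in the same time-independent isothermal coordinates on $S_0$ takes the form \eqref{intrinsicmetricflow} with $u$ solving \eqref{intrinsiclogdiffusion}; hence $u = f^2$, and $f^2$ solves the log-diffusion equation. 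Condition~(1) is then immediate from \eqref{arclengthcondition}, since $f_\xi^2 = f^2 - h_\xi^2 \leq f^2$. For condition~(2), I would invoke the Whitney structure \eqref{polestructure}: at a smooth pole location $v = p$, the embedding requires $f(v,t)$ to be odd and $h(v,t)$ to be even in $v - p$, so in particular $h_v(p,t) = 0$. Differentiating \eqref{xidefinition} yields $f_\xi/f = f_v/\sqrt{f_v^2 + h_v^2}$, whose limit at the pole equals $\operatorname{sgn}(f_v(p,t))$, which is $+1$ when the pole is approached from $v > p$ (so $\xi\to-\infty$) and $-1$ when approached from $v < p$ (so $\xi\to\infty$), matching the claimed signs.

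For part~(b), I would take the given smooth positive $u = f^2$, define $\widehat{h}$ by the stated integral, and work in the reverse direction. Condition~(1) makes the integrand $\sqrt{f^2 - f_\xi^2}$ real and continuous, so $\widehat{h}$ is $C^1$ in $\xi$ with $\widehat{h}_\xi \geq 0$, giving the claimed monotonicity. A direct first-fundamental-form calculation for \eqref{constructedextrinsicflow} then yields $g_{\xi\xi} = f_\xi^2 + \widehat{h}_\xi^2 = f^2$, $g_{\theta\theta} = f^2$, and $g_{\xi\theta} = 0$, so the Euclidean-induced metric on $\widehat{S}(t)$ is $u[{\rm d}\xi^2 + {\rm d}\theta^2]$. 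Since $u$ solves \eqref{intrinsiclogdiffusion} with $u(\xi,0) = f_0(\xi)^2$, defining $g(t) := u(\xi,t)[{\rm d}\xi^2 + {\rm d}\theta^2]$ on $S_0$ produces a Ricci flow initialized by the metric induced by \eqref{initialsurface}, and the identity in the $(\xi,\theta)$ coordinates furnishes the required global isometries $i(t)\colon(\widehat{S}(t), g_E(t)) \to (S_0, g(t))$. Condition~(2), which reads $(\log f)_\xi \to \pm 1$ as $\xi\to\mp\infty$, forces $f$ to decay essentially exponentially at each end, which makes the improper integral defining $\widehat{h}$ converge and forces $\widehat{h}_\xi \to 0$ there, so the profile curve $(f,\widehat{h})$ meets the axis of revolution horizontally and $\widehat{S}(t)$ closes up to a $C^1$-embedded pole point in $\mathbb{R}^3$. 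The concluding assertion is immediate: $\widehat{h}(\xi,0)$ is monotone in $\xi$ while $h_0$ is not, so $\widehat{h}(\xi,0) \neq h_0(\xi)$ and $\widehat{S}(0) \neq S_0$.

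The main obstacle in carrying out this plan is the pole analysis in part~(b). Because $\sqrt{f^2 - f_\xi^2}$ is only guaranteed to be nonnegative and continuous --- it may vanish, and near its zeros it is only H\"older rather than Lipschitz --- the function $\widehat{h}$ is in general no better than $C^1$ in $\xi$, and one must extract sufficiently precise asymptotics of $f$ and $f_\xi$ from condition~(2) (and in places from the log-diffusion equation itself) to verify both that the improper integral genuinely converges at the pole and that the profile curve $(f(\xi,t),\widehat{h}(\xi,t))$ terminates tangentially enough to the axis of revolution for $\widehat{S}(t)$ to be a $C^1$-embedded surface of revolution in $\mathbb{R}^3$ across the pole. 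Everything else is a direct unpacking of Lemma~\ref{isothermalcoordinates}, the arclength relation \eqref{arclengthcondition}, and the equivalence between the scalar logarithmic diffusion equation and two-dimensional Ricci flow in isothermal coordinates recorded just before the theorem.
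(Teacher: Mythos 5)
Your proposal is correct and follows essentially the same route as the paper: reduction to time-independent isothermal coordinates via Lemma~\ref{isothermalcoordinates}, identification of $u=f^2$ with a solution of \eqref{intrinsiclogdiffusion}, condition~(1) from the arclength relation \eqref{arclengthcondition}, condition~(2) from the Whitney pole structure \eqref{polestructure}, and the converse construction of $\widehat{h}$ by integrating $\sqrt{f^2-f_\xi^2}$. The pole issue you flag in part~(b) is settled in the paper exactly as you anticipate: condition~(2) gives $f\sim {\rm e}^{-c|\xi|}$ (so the arclength element $f$ is integrable and the improper integral for $\widehat{h}$ converges), and $C^1$ smoothness at the pole follows from ${\rm d}z/{\rm d}x=\widehat{h}_\xi/f_\xi=\sqrt{f^2/f_\xi^2-1}\rightarrow 0$ --- the ratio computation you correctly identify as necessary, rather than merely $\widehat{h}_\xi\rightarrow 0$.
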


\begin{remark}
In part (b), although it is not necessarily true that $\widehat{S}(0) = S_0$ (because for the extrinsic representation
the function $\widehat{h}$ is constructed to be monotone), these two surfaces are locally isometric, and if $S_0$ is non-toroidal,
then $\widehat{S}(0)$ and $S_0$ are globally isometric, although not necessarily by an ambient isometry.
If $S_0$ is topologically a torus, then $\widehat{S}(0)$ will be a Riemannian cover of $S_0$.
\end{remark}

\begin{proof}
\mbox{ }

\smallskip

{\em Part (a)}:  Because Ricci flow preserves isometries, any extrinsic representation of a Ricci flow in $\mathbb{R}^3$ initialized by $S_0$ must be comprised of
embedded surfaces of revolution $S(t)$, which we can take to be parameterized as in \eqref{extrinsicflow} for times $0 < t < T$.
Moreover, there is an
isometry $i(0): S(0) \rightarrow S_0$, so isothermal coordinates $(\xi, \theta)$ on $S_0$ also serve as isothermal coordinates
on $S(0)$.  By Lemma~\ref{isothermalcoordinates}, $(\xi, \theta)$ constitute time-independent isothermal coordinates for the
extrinsic flow $S(t)$ for $t \geq 0$.

For each time $t$ there exists an isometry $i(t): (S(t),g_E(t)) \rightarrow (S_0, g(t))$.  Comparing the extrinsic metric
$g_E(t)$ given by \eqref{isothermalmetricflow} with the intrinsic metric $g(t)$ given by \eqref{intrinsicmetricflow}, we
see that $u(\xi,t) = f(\xi,t)^2$ is a smooth solution of the initial value problem \eqref{intrinsiclogdiffusion}
with $u_0(\xi) = f_0(\xi)^2$.  Condition~(1) follows from
the arclength equation \eqref{arclengthcondition};  because the curve $\xi \mapsto (f(\xi,t), h(\xi,t))$
is immersed in $\mathbb{R}^2$, $f(\xi,t)^2 - f_{\xi}(\xi,t)^2 \geq 0$.
Condition~(2) follows by noting that $v=p$ is a pole location for $S(t)$ where $S(t)$ is of
at least $C^1$ smoothness.
The indeterminate limits in question can be resolved using the structure of $f_0$ at a pole location $v = p$ given
by \eqref{polestructure}, and the analogous structure for $t > 0$:  if $f(v,t)$ is of $C^{2k + 1}$ smoothness in $v$ at $v = p$, then
\begin{equation} \nonumber
f(v,t) = (v - p)F((v - p)^2,t)
\end{equation}
where  $F(0,t) \neq 0$, and $F(w,t)$ is of $C^k$ smoothness in $w$ at $w = 0$.  A calculation shows that for $t \geq 0$:
\begin{equation} \label{fluxatpole}
\lim_{\xi \rightarrow \pm \infty} \frac{f_{\xi}}{f} = \lim_{v \rightarrow p} \frac{f_v}{f} \frac{{\rm d}v}{{\rm d}\xi}
 = \frac{f_0^{\prime}(p)}{|f_0^{\prime}(p)|}.
\end{equation}
If $\xi \rightarrow -\infty$, then $v \rightarrow p^{+}$ and $f \rightarrow 0$, so $f_0^{\prime}(p) > 0$ and the limit \eqref{fluxatpole}
is $1$, whereas if $\xi \rightarrow \infty$, then $v \rightarrow p^{-}$ and $f \rightarrow 0$, so $f_0^{\prime}(p) < 0$ and the limit
\eqref{fluxatpole} is $-1$.

\smallskip

{\em Part (b)}:  By condition~(1), we can define for $\xi \in \mathbb{R}$ and $0 \leq t \leq T$ a function
$$ \widehat{h}(\xi,t) = \int \sqrt{f(\xi,t)^2 - f_{\xi}(\xi,t)^2} \mbox{ d}\xi$$
of at least $C^1$ smoothness and, using $f(\xi,t)$ and $\widehat{h}(\xi,t)$,
the family of embedded surfaces \eqref{constructedextrinsicflow} for $0 \leq t < T$.
The metric on $\widehat{S}(t)$ is given by
$$ \widehat{g}_E(t) = (f_{\xi}(\xi,t)^2 + h_{\xi}(\xi,t)^2) \mbox{ d}\xi^2 + f(\xi,t)^2 \mbox{ d}\theta^2  = f(\xi,t)^2 ({\rm d}\xi^2 + {\rm d}\theta^2) $$
where $u(\xi,t) = f(\xi,t)^2$ is a solution of the initial value problem \eqref{intrinsiclogdiffusion}
with $u_0(\xi) = f_0(\xi)^2$.  If $h_0$ is not monotone then
$S_0 \neq \widehat{S}(0)$, but nevertheless the coordinates $(\xi, \theta)$ serve as time-independent isothermal coordinates
for the extrinsic flow $\widehat{S}(t)$.  For each time $t$, identifying each point having coordinates $(\xi, \theta)$ on $(\widehat{S}(t), \widehat{g}_E(t))$
with the corresponding point having the same coordinates $(\xi, \theta)$ on $(S_0, g(t))$ and comparing the extrinsic metric
$\widehat{g}_E(t)$ with the intrinsic metric $g(t)$ as given by \eqref{intrinsicmetricflow} yields an isometry
$i(t): (\widehat{S}(t), g_E(t))  \rightarrow (S_0, g(t))$.  That is, the embedded surfaces $\widehat{S}(t)$ comprise a global extrinsic
representation of a Ricci flow initialized by $S_0$.

Suppose now that $\xi \rightarrow \infty$ or $\xi \rightarrow -\infty$ at a pole location $v = p$ of $S_0$, and that $t \in (0,T)$.
A.\ Rodr{\'{\i}}guez and J.\ L.\ V{\'a}zquez \cite{Rodriguez1990,Rodriguez1995} have shown that condition~(2) ensures that $f(\xi,t) \sim {\rm e}^{-c|\xi|}$.
Thus $f(\xi,t) \rightarrow 0$.  Moreover, as $f(\xi,t) \rightarrow 0$,
the arclength element on the curve $(x,z) = (f(\xi,t), \widehat{h}(\xi,t))$ is $f(\xi,t)$, which is integrable, resulting in a corresponding pole of $\widehat{S}(t)$.
That $\widehat{S}(t)$ is $C^1$ smooth at such a pole also follows since as $f(\xi,t) \rightarrow 0$,
$\displaystyle{\frac{{\rm d}z}{{\rm d}x} = \frac{\widehat{h}_{\xi}}{f_{\xi}} = \frac{\sqrt{f^2 - f_{\xi}^2}}{f_{\xi}} = \sqrt{\frac{f^2}{f_\xi^2} - 1} \rightarrow 0}$.
\end{proof}
Part (a) of Theorem~\ref{representationtheorem} provides an interpretation of global extrinsic representations in $\mathbb{R}^3$
of Ricci flows on surfaces of revolution
in terms of the dynamics of logarithmic diffusion.  Consider, for example, the global extrinsic representations of Ricci
flows on the $2$-sphere constructed by Rubinstein and Sinclair ~\cite{RubinsteinandSinclair2005}.  In this case, the initial
surface $S_0$, parameterized as in \eqref{initialsurface}, has poles at two values of the
original parameter $v$, say $v = p_1$ and $v = p_2$ where $p_1 < p_2$.  In time-independent isothermal coordinates, $\xi \rightarrow -\infty$ as
$v \rightarrow p_1^+$ and $\xi \rightarrow \infty$ as $v \rightarrow p_2^-$, and $u(\xi,t) = f(\xi,t)^2$ satisfies the
initial value problem \eqref{intrinsiclogdiffusion} with $u_0(\xi,t) = f_0(\xi)^2$.

The initial value problem
\eqref{intrinsiclogdiffusion} has multiple solutions for any initial function $u_0 \in L^1_{{\rm loc}}(\mathbb{R})$, but if
$u_0$ is a positive $L^1$ function, a unique solution can be
specified by imposing appropriate conditions on the flux $u_{\xi}/u$ associated with the conservation law in \eqref{intrinsiclogdiffusion}
at $x = -\infty$ and $x = \infty$ \cite{Rodriguez1990,Rodriguez1995}.
In particular, imposing conditions of the form
$u_{\xi}/u \rightarrow a$ as $\xi \rightarrow -\infty$ and $u_{\xi}/u \rightarrow -b$ as $\xi \rightarrow \infty$
for positive constants $a$ and $b$ uniquely determines a smooth, positive solution of \eqref{intrinsiclogdiffusion} for which
$$ \int_{-\infty}^{\infty} u(\xi,t) \mbox{ d}\xi =  \int_{-\infty}^{\infty} u_0(\xi) \mbox{ d}\xi - (a + b)t. $$
The conditions $f_{\xi}/f \rightarrow 1$ as $\xi \rightarrow -\infty$ and $f_{\xi}/f \rightarrow -1$ as $\xi \rightarrow \infty$
that maintain smoothness at the poles of $S(t)$ for $t > 0$ in Theorem~\ref{representationtheorem} are equivalent to the flux conditions
\begin{equation} \label{fluxconditions}
\frac{u_{\xi}}{u} = \frac{2f_{\xi}}{f} \rightarrow 2 \quad \mbox{as $\xi \rightarrow -\infty$}, \quad \quad \frac{u_{\xi}}{u} =
\frac{2f_{\xi}}{f} \rightarrow -2 \quad \mbox{as $\xi \rightarrow \infty$}
\end{equation}
Since $\int_{-\infty}^{\infty} u(\xi,t) \mbox{ d}t = \int_{-\infty}^{\infty} f(\xi,t)^2 \mbox{ d}\xi = (1/ 2\pi) \left( \mbox{surface area of $S(t)$} \right)$,
the flux conditions \eqref{fluxconditions} impose at each pole of $S(t)$ a loss of surface area at a constant rate of $2\pi$ square units
per unit time, resulting in the well-known extinction time of $T = (1/ 8\pi) \left( \mbox{surface area of $S_0$} \right)$.  The identification
of $\xi = \pm \infty$ with the poles of $S(t)$ in this flow is an extrinsic geometric realization of the intuition expressed by
J.\ L.\ V{\'a}zquez that for $f(\xi,t)$ near 0, ``[logarithmic] diffusion is so fast that, in some sense, infinity lies at a
finite distance $\ldots$'' ~\cite[p. 141]{Vazquez2006}.
\begin{example}[{\bf The shrinking sphere}]
The Ricci flow on the unit $2$-sphere initialized by the canonical metric has a well-known explicit extrinsic representation in $\mathbb{R}^3$ in which $S(t)$
is usually parameterized by \eqref{extrinsicflow}
with $f(v,t) = (\sqrt{1 - 2t}) \cos v$ and $h(v,t) = (\sqrt{1 - 2t}) \sin v$ for $-\pi/2 \leq v \leq \pi/2$ and $0 \leq t \leq 1/2$.
In time-independent isothermal coordinates, $S(t)$ is parameterized by \eqref{reparameterizedflow} with $f(\xi,t) = (\sqrt{1 - 2t}) \mbox{ {\rm sech} } \xi$
and $h(\xi,t) = (\sqrt{1 - 2t}) \tanh \xi$ for $-\infty < \xi < \infty$ and $0 \leq t \leq 1/2$.
\end{example}
Part (b) of Theorem~\ref{representationtheorem} provides a method of constructing a global extrinsic representation in $\mathbb{R}^3$ of a Ricci
flow initialized by a non-toroidal surface of revolution, provided that
the required solution of the logarithmic diffusion equation exists.  Here we illustrate this construction
using the following facts about logarithmic diffusion in one space dimension:
\begin{lemma}[J.\ R.\ Esteban, A.\ Rodr{\'{\i}}guez and J.\ L.\ V{\'a}zquez ~\cite{Esteban1988}] \label{existencetheorem}
For any initial function $u_0(\xi)$ having bounded derivatives of all
orders and such that for all $\xi \in \mathbb{R}$, $0 < m \leq u_0(\xi) \leq M$ for some positive numbers $m$ and $M$, the initial value problem
\eqref{intrinsiclogdiffusion} has a unique solution $u(\xi,t) \in C^{\infty}(\mathbb{R} \times (0,\infty))$ satisfying the conditions
$0 < m \leq u(\xi,t) \leq M$ and $|u_{\xi}(\xi,t)/u(\xi,t)| \leq \sup_{\xi \in \mathbb{R}} | u_0^{\prime}(\xi)/u_0(\xi) |$ for all $(\xi,t) \in \mathbb{R} \times (0,\infty)$.
For every $\xi \in \mathbb{R}$ and $t > 0$,
\begin{equation} \label{estimate}
\left( \frac{1}{u(\xi,t)} \right)_{\xi \xi}  < \frac{1}{t}.
\end{equation}
\end{lemma}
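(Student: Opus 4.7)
The plan is to follow the standard program for fast-diffusion equations: regularize on bounded strips so that classical quasilinear parabolic theory produces smooth approximate solutions, derive \emph{a priori} estimates uniform in the regularization, and pass to the limit. The crucial simplification offered by the hypotheses is the two-sided bound $m \leq u_0 \leq M$ with $m > 0$; once propagated in time, it makes the equation uniformly parabolic along the solution. Concretely I would first solve the problem on $[-R,R] \times [0,T]$ with initial data $u_0$ and matching boundary data (periodic, or Dirichlet after modifying $u_0$ to be constant for $|\xi| \geq R/2$), obtaining a smooth positive $u_R$ from standard quasilinear theory.

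The $L^{\infty}$-bound $m \leq u_R \leq M$ is immediate from the comparison principle, since any positive constant $c$ satisfies $c_t = (\log c)_{\xi\xi} = 0$ and is therefore both a sub- and a supersolution. Uniqueness within the class of solutions bounded between $m$ and $M$ then follows by linearizing around one solution and applying the parabolic maximum principle to the difference, the linearized operator being uniformly parabolic on this class.

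For the gradient estimate on $w := u_\xi/u$, I would use a Bernstein-type argument. Writing $v = \log u$ recasts the equation as $v_t = e^{-v} v_{\xi\xi}$, and one differentiation in $\xi$ gives
\begin{equation*}
  w_t \;=\; e^{-v}\bigl(w_{\xi\xi} - w\, w_\xi\bigr).
\end{equation*}
At a spatial maximum of $w^2$ we have $(w^2)_\xi = 0$ and $(w^2)_{\xi\xi} \leq 0$, from which $(w^2)_t \leq 0$, so that $\sup_\xi |w(\cdot,t)|$ is non-increasing, yielding the claimed bound.

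The Aronson--B\'enilan estimate \eqref{estimate} is the heart of the matter. Setting $p = 1/u$ and $W = p_{\xi\xi}$, a direct computation rewrites $u_t = (\log u)_{\xi\xi}$ as $p_t = pW - p_\xi^2$; differentiating twice in $\xi$ and collecting terms yields
\begin{equation*}
  W_t \;=\; p\, W_{\xi\xi} - W^2.
\end{equation*}
Since the ODE $\phi' = -\phi^2$ is solved by $\phi(t) = 1/t$, the function $Z := W - 1/t$ obeys $Z_t = p\, Z_{\xi\xi} - Z(Z + 2/t)$. As $t \to 0^+$, $Z \to -\infty$ uniformly (because $W(\cdot,0)$ is bounded), and at any first point where $Z$ reaches zero from below one has $Z_{\xi\xi} \leq 0$ and thus $Z_t \leq 0$, contradicting the upward crossing; this gives $W \leq 1/t$, with the strict inequality following from the strong maximum principle. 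The main obstacle throughout is that these maximum-principle arguments take place on the unbounded line $\mathbb{R}$, where spatial suprema need not be attained; I would sidestep this by performing every estimate first on the bounded approximating strips, where classical maximum principles apply and all bounds are uniform in $R$, and then pass to the limit $R \to \infty$ using the uniform $L^{\infty}$-bound together with interior parabolic Schauder estimates to extract a $C^\infty$ solution on $\mathbb{R} \times (0,\infty)$ inheriting every bound.
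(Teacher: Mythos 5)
The paper offers no proof of this lemma at all: it is imported as a black box from the cited reference of Esteban, Rodr\'{\i}guez and V\'azquez, so there is no internal argument to compare yours against. Your sketch is a correct reconstruction of the standard program that the cited work itself follows (regularize, propagate two-sided bounds to get uniform parabolicity, Bernstein estimate for the logarithmic gradient, Aronson--B\'enilan estimate by comparing the evolution of the second derivative of the pressure with the ODE $\phi' = -\phi^2$). Your two key computations check out: with $v = \log u$ one indeed gets $w_t = e^{-v}(w_{\xi\xi} - w\,w_\xi)$ for $w = v_\xi$, and with $p = 1/u$ the identity $p_t = p\,p_{\xi\xi} - p_\xi^2$ differentiates twice to give exactly $W_t = p\,W_{\xi\xi} - W^2$ for $W = p_{\xi\xi}$, since the cross terms $p_\xi p_{\xi\xi\xi}$ cancel; the operator is uniformly parabolic because $1/M \leq p \leq 1/m$. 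Three points deserve slightly more care than you give them, though none is fatal: (i) under Dirichlet truncation the boundary values of $w$ for $t>0$ are not controlled a priori, so the periodic truncation you mention parenthetically is really the one to use for the Bernstein and Aronson--B\'enilan arguments; (ii) uniqueness on the whole line requires a Phragm\'en--Lindel\"of-type maximum principle for the bounded difference of two solutions (available here because both solutions lie in $[m,M]$ and the linearized coefficient is bounded between $1/M$ and $1/m$); and (iii) the ``first upward crossing'' argument for $Z = W - 1/t$ should be run on $Z - \epsilon$ (or with the barrier $1/(t-\delta)$) to turn the weak inequality $Z_t \leq 0$ into a genuine contradiction, after which the strong maximum principle upgrades $W \leq 1/t$ to the strict inequality \eqref{estimate}. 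What your approach buys is a self-contained justification of a result the paper simply quotes; what the paper buys by citing is brevity, since these estimates are exactly the content of the reference.
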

Lemma~\ref{existencetheorem} quickly yields eternal global extrinsic representations of certain Ricci flows initialized by suitable immersed surfaces of revolution:
\begin{theorem} \label{easynopolecase}
Let $S_0$ be a surface of revolution smoothly immersed in $\mathbb{R}^3$ by the parameterization \eqref{initialsurface} that is connected,
complete, without boundary, and non-toroidal.
Let $(\xi, \theta)$ be isothermal coordinates on $S_0$ as in \eqref{xidefinition}.
Suppose further that the radius function $f_0(\xi)$ has bounded derivatives of all orders
and that for some positive numbers $m$ and $M$, $0 < m \leq f_0(\xi) \leq M$ for all $\xi \in \mathbb{R}$.  Then there exists a Ricci flow initialized by $S_0$ that has a global extrinsic
representation  in $\mathbb{R}^3$ for $0 < t < \infty$.
\end{theorem}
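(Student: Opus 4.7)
The plan is to apply Theorem~\ref{representationtheorem}(b) using the solution of \eqref{intrinsiclogdiffusion} provided by Lemma~\ref{existencetheorem}. Setting $u_0(\xi) = f_0(\xi)^2$, the hypotheses of Lemma~\ref{existencetheorem} are immediately verified: $f_0$ bounded together with its derivatives of all orders gives the same for $u_0 = f_0^2$ by Leibniz's rule, while $0 < m \le f_0 \le M$ yields $0 < m^2 \le u_0 \le M^2$. I would therefore obtain a smooth, strictly positive solution $u(\xi,t)$ on $\mathbb{R} \times (0,\infty)$, and set $f = \sqrt{u}$.

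The main step, and essentially the only nontrivial one, is to recognize that the isothermal origin of the coordinate $\xi$ forces the correct quantitative bound on $|u_0'/u_0|$. Indeed, the initial immersion \eqref{initialsurface} re-expressed in the time-independent isothermal coordinates of Lemma~\ref{isothermalcoordinates} satisfies the arclength identity \eqref{arclengthcondition}: $f_0'(\xi)^2 + h_0'(\xi)^2 = f_0(\xi)^2$. In particular $|f_0'(\xi)/f_0(\xi)| \le 1$, so
\[
\sup_{\xi \in \mathbb{R}} \left| \frac{u_0'(\xi)}{u_0(\xi)} \right| = 2 \sup_{\xi \in \mathbb{R}} \left| \frac{f_0'(\xi)}{f_0(\xi)} \right| \le 2.
\]
This is what upgrades the qualitative gradient estimate in Lemma~\ref{existencetheorem} into the sharp embeddability condition of Theorem~\ref{representationtheorem}(1).

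With this observation the remainder is short. Lemma~\ref{existencetheorem} yields $|u_\xi(\xi,t)/u(\xi,t)| \le 2$ for all $(\xi,t) \in \mathbb{R} \times (0,\infty)$, whence $f = \sqrt{u}$ satisfies $|f_\xi/f| = \tfrac{1}{2}|u_\xi/u| \le 1$; this is condition~(1) of Theorem~\ref{representationtheorem}. Because $f_0 \ge m > 0$, the non-toroidal surface $S_0$ has no poles (its generating profile curve never meets the axis and, not being closed, extends to $\xi = \pm\infty$), so condition~(2) is vacuous. Theorem~\ref{representationtheorem}(b) then produces an embedded family $\widehat{S}(t)$ comprising a global extrinsic representation in $\mathbb{R}^3$ of a Ricci flow initialized by $S_0$, and since the solution $u$ furnished by Lemma~\ref{existencetheorem} exists for all positive times, this representation is valid for $0 \le t < \infty$.

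The potential obstacle I would worry about is the sharpness of the gradient bound: the bound in Lemma~\ref{existencetheorem} is only $|u_\xi/u| \le \sup_\xi |u_0'/u_0|$, which is useless unless the initial data respects the threshold $2$ dictated by condition~(1). The work is done, almost for free, by the isothermal form of the initial immersion; without the identity \eqref{arclengthcondition} at $t = 0$ one could not hope to keep the reconstructed profile curve $(f(\xi,t), \widehat{h}(\xi,t))$ real for all time.
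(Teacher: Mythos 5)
Your proof is correct and follows essentially the same route as the paper: apply Lemma~\ref{existencetheorem} to $u_0 = f_0^2$, use the arclength identity \eqref{arclengthcondition} to get $\sup|f_0'/f_0|\le 1$ so that the gradient estimate propagates condition~(1) for all time, note that condition~(2) is vacuous in the absence of poles, and conclude via part~(b) of Theorem~\ref{representationtheorem}. Your added remark that the isothermal normalization is precisely what makes the gradient bound land on the threshold $2$ is a fair observation but not a departure from the paper's argument.
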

\begin{proof}
Let $u(\xi,t)$ be the solution of the initial value problem \eqref{intrinsiclogdiffusion} given by Lemma~\ref{existencetheorem}
for the initial function $u_0(\xi) = f_0(\xi)^2$.
Because $S_0$ has no poles, the theorem follows from part (b) of Theorem~\ref{representationtheorem} if we can verify condition~(1).  This condition holds because for any $\xi \in \mathbb{R}$ and $t > 0$:
$$ \left| \frac{f_{\xi}}{f} \right| = \frac{1}{2} \left| \frac{u_{\xi}}{u} \right| \leq \frac{1}{2} \sup_{\xi \in \mathbb{R}} \left| \frac{u_0^{\prime}(\xi)}{u_0(\xi)} \right| =
\sup_{\xi \in \mathbb{R}} \left| \frac{f_0^{\prime}(\xi)}{f_0(\xi)} \right| \leq 1. $$
(The last inequality follows from the arclength equation \eqref{arclengthcondition} because the curve $\xi \mapsto (f_0(\xi), h_0(\xi))$
is immersed in $\mathbb{R}^2$, so that $f_0(\xi)^2 - f_0^{\prime}(\xi)^2 \geq 0$ for all $\xi \in \mathbb{R}$.)
\end{proof}

\section{Local Extrinsic Representations of Ricci Flows Initialized by Toroidal Surfaces of Revolution}

Construction of a global extrinsic representation in $\mathbb{R}^3$ for the Ricci flow of a toroidal surface of revolution $S_0$ is problematic for topological reasons, and may be impossible~\cite{Taft2010}.
Nevertheless,
Theorem~\ref{easynopolecase} can be used to construct a {\em local} extrinsic representation in $\mathbb{R}^3$ of any such flow.
\begin{theorem} \label{periodicinitialcurve}
Let $S_0$ be a toroidal surface of revolution immersed in $\mathbb{R}^3$ by the parameterization \eqref{initialsurface} where $f_0$ and $h_0$ are smooth periodic functions
defined for $-\infty < v < \infty$
with a common period $P$ in $v$ such that $f_0(v) > 0$ for all $v$.  Let $Q$ be the common period of $f_0(\xi)$ and $h_0(\xi)$ in isothermal coordinates $(\xi, \theta)$:
\begin{equation} \nonumber
Q =  \int_0^P \frac{ \sqrt{f_0^{\prime}(s)^2 + h_0^{\prime}(s)^2}} {f_0(s)} \mbox{ d}s.
\end{equation}
Then for times $0 \leq t < \infty$, the Ricci flow initialized by $S_0$ has a local extrinsic representation  in $\mathbb{R}^3$ comprised of surfaces of the form
\begin{equation} \nonumber
\widehat{S}(t) = \left\{ \left( f(\xi,t) \cos \theta, f(\xi,t) \sin \theta, \widehat{h}(\xi,t) \right): 0 \leq \theta \leq 2\pi, -\infty < \xi < \infty \right\}.
\end{equation}
For each $t \geq 0$, $\widehat{h}(\xi,t)$ is monotone in $\xi$, but $f(\xi,t)$ and $h_{\xi}(\xi,t)$ each have period $Q$ in $\xi$, so there exists a positive function $Z(t)$ such that for all $-\infty < \xi < \infty$, $\widehat{h}(\xi+Q,t)= \widehat{h}(\xi,t)+Z(t)$.  Thus the surfaces $\widehat{S}(t)$ are non-compact but periodic.  The cylinder $\widehat{S}(0)$ is a Riemannian cover of $S_0$, and for each $t > 0$, $\widehat{S}(t)$ is a Riemannian cover of $(S_0,g(t))$, the Ricci flow initialized by $S_0$.

Moreover, for all $\xi \in \mathbb{R}$, $lim_{t \rightarrow \infty} f(\xi,t) = R_{\infty}$ where
\begin{equation} \nonumber
\left( R_{\infty} \right)^2 = \frac{1}{Q} \int_0^Q f_0(\xi)^2 \mbox{ d}\xi =
\frac{\displaystyle \int_0^P f_0(v) \sqrt{f_0^{\prime}(v)^2 + h_0^{\prime}(v)^2} \mbox{ d}v}{\displaystyle \int_0^P \left( \frac{1}{f_0(v)} \right) \sqrt{f_0^{\prime}(v)^2 + h_0^{\prime}(v)^2} \mbox{ d}v} .
\end{equation}
\end{theorem}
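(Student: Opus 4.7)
The strategy is to apply part (b) of Theorem~\ref{representationtheorem} to the solution of \eqref{intrinsiclogdiffusion} supplied by Lemma~\ref{existencetheorem}, much as in the proof of Theorem~\ref{easynopolecase}, and then to exploit periodicity together with the semi-convexity estimate \eqref{estimate} to recover the structure of $\widehat{h}$ and the asymptotic limit $R_\infty$.

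\emph{Construction.} By the toroidal case of Lemma~\ref{isothermalcoordinates}, $f_0(\xi)$ is smooth, strictly positive, and $Q$-periodic on $\mathbb{R}$, and satisfies $|f_0'/f_0|\leq 1$ by the arclength condition \eqref{arclengthcondition}. Hence $u_0 := f_0^2$ has bounded derivatives of all orders and is sandwiched between positive constants, so Lemma~\ref{existencetheorem} produces a unique smooth positive solution $u(\xi,t)$ of \eqref{intrinsiclogdiffusion} with $|u_\xi/u|\leq 2$, equivalently $|f_\xi/f|\leq 1$ for $f = \sqrt{u}$. Since $S_0$ is toroidal and thus has no poles, condition~(2) of Theorem~\ref{representationtheorem} is vacuous, and part~(b) yields the embedded surfaces $\widehat{S}(t)$ with $\widehat{h}(\xi,t) = \int \sqrt{f^2 - f_\xi^2}\, d\xi$.

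\emph{Periodicity and the Riemannian cover.} Translation-invariance of \eqref{intrinsiclogdiffusion} together with the uniqueness clause of Lemma~\ref{existencetheorem} forces $u(\xi+Q,t) = u(\xi,t)$, so the integrand $\sqrt{f^2 - f_\xi^2}$ is $Q$-periodic in $\xi$. Consequently $\widehat{h}(\xi+Q,t) - \widehat{h}(\xi,t) = Z(t) := \int_0^Q \sqrt{f^2 - f_\xi^2}\, d\xi$ is independent of $\xi$, and $Z(t) > 0$ since $f^2 \equiv f_\xi^2$ over a period would force $f \equiv 0$. Monotonicity of $\widehat{h}$ is immediate from non-negativity of the integrand. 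The translation $\xi \mapsto \xi + Q$ then realizes $\widehat{S}(t)$ as a periodic cylinder on which the group $\mathbb{Z}$ acts freely and properly discontinuously by ambient Euclidean isometries (a vertical shift by $Z(t)$), and the quotient is isometric to $(S_0,g(t))$ by the identification of metrics carried out in the proof of Theorem~\ref{representationtheorem}(b), exhibiting $\widehat{S}(t)$ as a Riemannian cover.

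\emph{Long-time behavior.} I expect this to be the main obstacle: converting an averaged statement into pointwise convergence. Integrating \eqref{intrinsiclogdiffusion} over one period and using periodicity of $(\log u)_\xi$ yields the conservation law $\int_0^Q u(\xi,t)\, d\xi = \int_0^Q f_0(\xi)^2\, d\xi =: Q R_\infty^2$ for all $t \geq 0$. To promote this to uniform convergence, set $v := 1/u$; then \eqref{estimate} gives $v_{\xi\xi} < 1/t$. Since $v$ is $Q$-periodic, so is $v_\xi$, and $\int_0^Q v_{\xi\xi}\, d\xi = 0$; combining the one-sided bound with this mean-zero constraint shows that the total oscillation of $v_\xi$ over a period is $O(Q/t)$, and then the vanishing period-average of $v_\xi$ forces $\sup_\xi |v_\xi(\xi,t)| = O(Q/t)$. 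Hence $v(\cdot,t)$, and therefore $u(\cdot,t)$, converges uniformly in $\xi$ to a constant at rate $O(1/t)$, and the conservation law identifies this constant as $R_\infty^2$. The closed-form expression follows by the substitution $d\xi = \sqrt{f_0'(v)^2 + h_0'(v)^2}/f_0(v)\, dv$ applied separately to $\int_0^Q f_0(\xi)^2\, d\xi$ and to $Q$, producing the quoted ratio of integrals over $[0,P]$.
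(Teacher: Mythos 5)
Your proposal is correct and follows the paper's overall skeleton --- Lemma~\ref{existencetheorem} applied to $u_0 = f_0^2$, the construction of Theorem~\ref{representationtheorem}(b), $Q$-periodicity of $u$ via translation invariance plus uniqueness, the conservation law $\frac{d}{dt}\int_0^Q u\,d\xi = 0$, and the covering-space argument (the paper builds the unwound cover $\widehat{S}_0$ explicitly first and quotients by the deck group $G$, citing a reference for the compatibility of Ricci flow with the quotient; you construct the periodic cylinder directly and quotient by the vertical translation, which amounts to the same thing). Where you genuinely diverge is the long-time analysis. The paper expands the Aronson--B\'enilan-type estimate $(1/u)_{\xi\xi} < 1/t$, integrates over a period, integrates by parts to obtain $\int_0^Q u_\xi^2\,d\xi \leq \frac{1}{3t}\int_0^Q u^3\,d\xi$, and invokes the Poincar\'e inequality, yielding $|u - R_\infty^2| = O(t^{-1/2})$. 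You instead set $v = 1/u$, note that $v_\xi$ is $Q$-periodic with $\int_0^Q v_{\xi\xi}\,d\xi = 0$, and use the one-sided bound $v_{\xi\xi} < 1/t$ to control $\int_0^Q |v_{\xi\xi}|\,d\xi \leq 2Q/t$, hence $\sup_\xi|v_\xi| \leq 2Q/t$ (since $v_\xi$ vanishes somewhere in each period); combined with the conserved period-average this gives pointwise convergence at rate $O(1/t)$. Your argument is more elementary (no Poincar\'e, no integration by parts) and quantitatively sharper than the paper's. One small repair: your justification that $Z(t) > 0$ is off --- $f^2 \equiv f_\xi^2$ over a period does not force $f \equiv 0$; rather, for smooth positive $f$ it forces $f_\xi/f \equiv \pm 1$, i.e.\ $f = Ce^{\pm\xi}$, which contradicts periodicity. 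The conclusion stands.
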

\begin{proof}
Let $(\xi, \theta)$ be isothermal coordinates on $S_0$ as in \eqref{xidefinition}, and let
\begin{equation} \nonumber
\widehat{S}_0 = \left\{ \left( f_0(\xi) \cos \theta, f_0(\xi) \sin \theta, \widehat{h}_0(\xi) \right): 0 \leq \theta \leq 2\pi, -\infty < \xi < \infty \right\}
\end{equation}
where $\widehat{h}_0(\xi) = \int \left| h_0^{\prime}(\xi) \right| \mbox{ d}\xi$ is a monotone function of $\xi$.

Figure 1 illustrates the geometry of the construction of the Riemannian cover $\widehat{S}_0$ of $S_0$ in the case when $S_0$ is generated by revolving a figure eight (its ``profile curve'') around the $z$-axis.  Starting at height $Z_S$, the profile curve is traversed (bold curve) with increasing $z$ until a local maximum is reached at height $Z_{C_1}$ (see Figure 1 - left side) is reached.  At this point, the untraversed portion of the curve is reflected vertically through the line $z=Z_{C_1}$ (see Figure 1 - right side).  From here, the tracing of the curve continues until another local maximum is reached at height $Z_{C_2}$.  The untraversed portion of the curve is again reflected vertically; this time, through the line $z=Z_{C_2}$.  This process continues until the profile curve has been ``unwound" to produce a monotonically non-decreasing curve, which is then extended periodically in both directions.

\begin{figure}[ht!]
\centerline{
\includegraphics[width=4.5in]{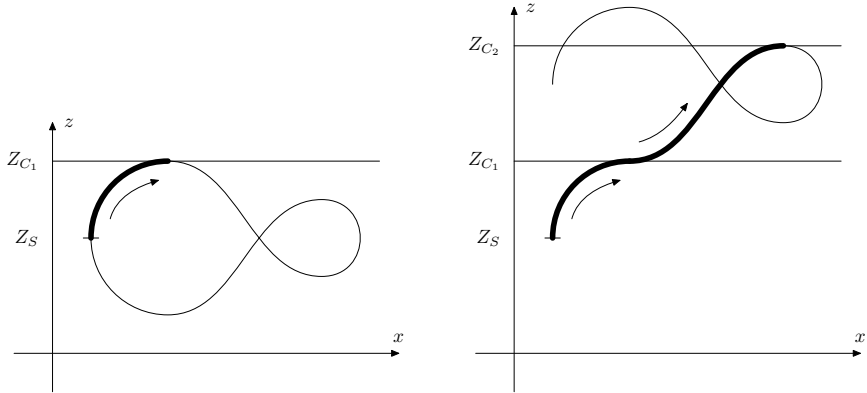}
}
\caption{The construction of the cover $\widehat{S}_0$ of $S_0$.}
\end{figure}

Part (b) of Theorem~\ref{easynopolecase} guarantees a global extrinsic representation of a Ricci flow $(\widehat{S}_0, \widehat{g}(t))$ initialized by $\widehat{S}_0$.  It is comprised of embedded surfaces $\widehat{S}(t)$ parameterized as in \eqref{constructedextrinsicflow}, where $\widehat{S}(0) = \widehat{S}_0$ and $u(\xi,t) = f(\xi,t)^2$ is the unique solution of \eqref{intrinsiclogdiffusion} specified by Lemma~\ref{existencetheorem} for the initial function $u_0(\xi) = f_0(\xi)^2$.  Because $f_0(\xi)$ has period $Q$, $u(\xi + Q,t)$ also qualifies as this unique solution, so that for any $t > 0$, $u(\xi,t) = u(\xi+Q,t)$ for all $\xi \in \mathbb{R}$.  The $L^1$ mass of $u(\xi,t)$ over one period is conserved since for $t > 0$, $(\mbox{d}/\mbox{d}t) \int_0^Q u(\xi,t) \mbox{ d}\xi = \int_0^Q u_t(\xi,t) \mbox{ d}\xi = \int_0^Q [\log u(\xi,t)]_{\xi \xi} \mbox{ d}\xi = [\log u(\xi,t)]_{\xi}(Q,t) - [\log u(\xi,t)]_{\xi}(0,t) = 0$.

Let $g_E$ be the metric induced on $S_0$ by the ambient Euclidean metric.  Because $(S_0, g_E)$ is a quotient of $(\widehat{S}_0, g_E)$  by the discrete group of isometries $G = \left\{ (\xi, \theta) \rightarrow (\xi + n\cdot Q, \theta) : n \in \mathbb{Z} \right\}$, and because the Ricci flow in question preserves the isometry group $G$ (due to the fact that $u(\xi,t)$ has period $Q$ in $\xi$), the quotient by $G$ produces, for each $t > 0$, a Riemannian covering $c(t): (\widehat{S}_0, \widehat{g}(t)) \rightarrow (S_0, g(t))$ \cite{Andrews2011}.  For each $t \geq 0$, composing the covering map $c(t)$ with the corresponding isometry $i(t): (\widehat{S}(t), g_E(t)) \rightarrow (\widehat{S}_0, \widehat{g}(t))$ yields the required local isometry
$c(t) \circ i(t) : (\widehat{S}(t), g_E(t)) \rightarrow (S_0, g(t))$.

That $f(\xi,t)$ approaches a limiting radius as $t \rightarrow \infty$ follows from the estimate \eqref{estimate}:  $(1/u)_{\xi \xi} \leq (1/t)$.  Writing out the second derivative, rearranging, and integrating over one period yields
\begin{equation} \label{firststep}
 2 \int_0^Q u_{\xi}^2 \mbox{ d}\xi \leq \frac{1}{t} \int_0^Q u^3 \mbox{ d} \xi + \int_0^Q u u_{\xi \xi} \mbox{ d} \xi.
\end{equation}
Integrating by parts in the last term of \eqref{firststep} then yields
\begin{eqnarray}
 2 \int_0^Q u_{\xi}^2 \mbox{ d}\xi \leq \frac{1}{t} & \leq & \int_0^Q u^3 \mbox{ d} \xi - \int_0^Q u_{\xi}^2 \mbox{ d} \xi   \nonumber \\
\int_0^Q u_{\xi}^2 \mbox{ d}\xi & \leq & \frac{1}{3t} \int_0^Q u^3 \mbox{ d} \xi. \label{secondstep}
\end{eqnarray}
Let $(R_{\infty})^2 = (1/Q)\int_0^Q u(\xi,t) \mbox{ d}\xi = (1/Q)\int_0^Q u_0(\xi)\mbox{ d}\xi$.  Then the Poincar\'{e} inequality and \eqref{secondstep} yield
\begin{equation} \nonumber
\left| u(\xi,t) - (R_{\infty})^2 \right| \leq Q \left[ \int_0^Q u_{\xi}^2 \mbox{ d}\xi \right]^{1/2} \leq Q \left[ \frac{1}{3t} \int_0^Q u^3 \mbox{ d}\xi \right]^{1/2}
\leq \frac{Q^{3/2} M^3}{\sqrt{3t}}
\end{equation}
for all $\xi \in \mathbb{R}$ and all $t > 0$, where $M = \sup_{\xi \in \mathbb{R}} f_0(\xi)$.
\end{proof}

\begin{remark}
In general, the $z$-period $Z(t)$ of the surface $\widehat{S}(t)$ in Theorem~\ref{periodicinitialcurve} changes with time $t$, but $Z(t) \rightarrow R_{\infty}Q$ as $t \rightarrow \infty$.
\end{remark}

\begin{example}[{\bf The standard torus}]
Let $S_0$ be a standard torus parameterized by \eqref{initialsurface} with $f_0(v) = a + b \cos v$ and $h_0(v) = b\sin v$ where $a > b > 0$,
and consider the Ricci flow initialized by this torus.  The limiting radius $R_{\infty}$ of Theorem~\ref{periodicinitialcurve} is an average of the outer
radius $R_O = a + b$ and inner radius $R_I = a - b$ of the torus given by
$$(R_{\infty})^2 = \frac{\int_0^{2\pi} a + b \cos v \mbox{ d}v}{\int_0^{2\pi} 1/\left( a + b \cos v \right) \mbox{ d}v}
= a \sqrt{a^2 - b^2}
= \left( \frac{R_O + R_I}{2} \right) \sqrt{R_O R_I}. $$
For the evolving Riemannian cover $\widehat{S}(t)$ of the Ricci flow $(S_0, g(t))$ initialized by $S_0$ constructed in Theorem~\ref{periodicinitialcurve},
the limiting $z$-period, $\lim_{t \rightarrow \infty} Z(t)$, is given by
$$ R_{\infty} Q = \sqrt{\left( a \sqrt{a^2 - b^2} \right)} \left( \int_0^{2\pi} \frac{b}{a + b \cos (v)} \mbox{ d}v \right) =
   2 \pi b \sqrt{\frac{a}{\sqrt{a^2 - b^2}}} . $$
Note that this limiting $z$-period is always greater than the circumference $2 \pi b$ of the generating circle for the torus.
\end{example}

\section{Compact evolution of tori}

It is possible to modify the extrinsic representations constructed in Theorem~\ref{periodicinitialcurve}, which are comprised of smooth but non-compact surfaces embedded in $\mathbb{R}^3$,
to create alternate representations comprised of compact but non-smooth surfaces immersed in $\mathbb{R}^3$.

\begin{theorem} \label{creasetheorem}
Let $S_0$ be a toroidal surface of revolution immersed in $\mathbb{R}^3$ by the parametrization \eqref{initialsurface}.  Let $Q$ be the common period of $f_0(\xi)$ and $h_0(\xi)$ in isothermal coordinates $(\xi, \theta)$.  Let $\widehat{S}(t)$ be the local extrinsic representation of the Ricci flow initialized by $S_0$ given by Theorem~\ref{periodicinitialcurve}, with parametrization $\left( f(\xi,t) \cos \theta, f(\xi,t) \sin \theta, \widehat{h}(\xi,t) \right)$ for $0 \leq \theta \leq 2\pi$ and $-\infty < \xi < \infty$.  Recall that for all $t \geq 0$, $f(\xi,t)$ has period $Q$
and $\widehat{h}(\xi,t)$ is a monotone non-decreasing function such that $\widehat{h}(\xi+Q,t)= \widehat{h}(\xi,t)+Z(t)$ for a positive function $Z(t)$, which is the $z$-period of $\widehat{S}(t)$.

Choose an arbitrary starting height $z_0$.  For each $t \geq 0$, choose $\xi_0(t)$ such that $\widehat{h}(\xi_0(t),t)= z_0$.  Choose $\xi_1(t)> \xi_0(t)$ such that $\widehat{h}(\xi_1(t),t) = z_0 + Z(t)/2$, and define the following functions:
\begin{eqnarray} \label{crease}
f(\xi,t) & := & f(\xi,t) \nonumber \\
h_c (\xi,t) & := & \begin{cases}
\widehat{h}(\xi,t), & \mbox{for } \xi \in [\xi_{0}(t),\xi_{1}(t)]\\
2z_{0}+Z(t)-\widehat{h}(\xi,t), & \mbox{for }\xi\in[\xi_{1}(t),\xi_{0}(t) + Q]. \nonumber
\end{cases}
\end{eqnarray}
Then for each $t \geq 0$, the surface of revolution
\small
\begin{equation} \nonumber
S_c(t)  =  \left\{ \left( f(\xi,t) \cos \theta, f(\xi,t) \sin \theta, h_c(\xi,t) \right) :  \xi_0(t) < \xi < \xi_0(t) + Q, 0 \leq \theta \leq 2\pi  \right\}
\end{equation}
\normalsize
is compact, continuous, and non-smooth only at the circles corresponding to $\xi=\xi_0(t)$ and $\xi=\xi_1(t)$, where it has creases.  For $\xi \in (\xi_0(t),\xi_1(t))$ and $\xi \in (\xi_1(t),\xi_0(t) + Q)$, the surfaces $S_c(t)$ comprise a local extrinsic representation  in $\mathbb{R}^3$ of the Ricci flow initialized by $S_0$.
\end{theorem}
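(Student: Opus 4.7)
The plan is to verify each assertion in turn, exploiting the single key observation that the vertical reflection $\widehat{h}\mapsto 2z_0+Z(t)-\widehat{h}$ does not alter $|\widehat{h}_\xi|$, and therefore does not alter the induced Riemannian metric on the corresponding annular piece of the surface. Granting this, the construction simply folds one $\xi$-period of the non-compact Riemannian cover $\widehat{S}(t)$ back on itself to close up vertically, preserving all intrinsic geometric quantities except at the fold lines.

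First, I would confirm that $\xi_0(t)$ and $\xi_1(t)$ are well-defined. Because $\widehat{h}(\cdot,t)$ is non-decreasing and satisfies $\widehat{h}(\xi+Q,t)=\widehat{h}(\xi,t)+Z(t)$, it is a surjection onto $\mathbb{R}$, so some $\xi_0(t)$ with $\widehat{h}(\xi_0(t),t)=z_0$ exists, and some $\xi_1(t)\in(\xi_0(t),\xi_0(t)+Q)$ with $\widehat{h}(\xi_1(t),t)=z_0+Z(t)/2$ exists. Compactness of $S_c(t)$ is then immediate as the continuous image of $[\xi_0(t),\xi_0(t)+Q]\times[0,2\pi]$ after the natural endpoint identifications. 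For continuity across $\xi=\xi_1(t)$, the two branches of $h_c$ agree by the choice of $\xi_1(t)$: $\widehat{h}(\xi_1(t),t)=z_0+Z(t)/2=2z_0+Z(t)-\widehat{h}(\xi_1(t),t)$. For continuity across the seam $\xi=\xi_0(t)+Q\sim\xi_0(t)$, use $f(\xi_0(t)+Q,t)=f(\xi_0(t),t)$ together with $h_c(\xi_0(t)+Q,t)=2z_0+Z(t)-\widehat{h}(\xi_0(t)+Q,t)=z_0=\widehat{h}(\xi_0(t),t)$.

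Next, on each open subinterval $(\xi_0(t),\xi_1(t))$ and $(\xi_1(t),\xi_0(t)+Q)$, the functions $f$ and $h_c$ are smooth by inheritance from $\widehat{S}(t)$, so the surface is smoothly embedded there. To locate the creases, compute the one-sided $\xi$-derivatives of $h_c$ at the boundary circles: at $\xi_1(t)$, the left derivative is $+\widehat{h}_\xi(\xi_1(t),t)$ and the right derivative is $-\widehat{h}_\xi(\xi_1(t),t)$, and an analogous sign flip occurs at the seam. To establish the local extrinsic representation property, observe that on each smooth piece the induced Euclidean metric is
\[
g_{E,c}(\xi,t) = \bigl(f_\xi(\xi,t)^2 + h_{c,\xi}(\xi,t)^2\bigr){\rm d}\xi^2 + f(\xi,t)^2\,{\rm d}\theta^2,
\]
and since $h_{c,\xi}=\pm\widehat{h}_\xi$ in either case, $h_{c,\xi}^2=\widehat{h}_\xi^2=f^2-f_\xi^2$ by the arclength relation~\eqref{arclengthcondition} for $\widehat{S}(t)$, whence $g_{E,c}=f^2({\rm d}\xi^2+{\rm d}\theta^2)$, the same isothermal metric carried by $\widehat{S}(t)$. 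The local isometry $c(t)\circ i(t):(\widehat{S}(t),g_E(t))\to(S_0,g(t))$ furnished by Theorem~\ref{periodicinitialcurve} therefore restricts, on each smooth piece of $S_c(t)$, to the required local isometry into the Ricci flow $(S_0,g(t))$.

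The only delicate point, and the closest thing to an obstacle, is confirming that the non-smoothness at $\xi_0(t)$ and $\xi_1(t)$ is genuine rather than illusory: the one-sided derivatives $\pm\widehat{h}_\xi$ differ only when $\widehat{h}_\xi\neq 0$, i.e., when $f_\xi^2<f^2$ at these points. Since $\widehat{h}_\xi=\sqrt{f^2-f_\xi^2}\geq 0$ and equals zero only on the closed, proper set $\{f_\xi^2=f^2\}$ (controlled by condition~(1) of Theorem~\ref{representationtheorem} and Lemma~\ref{existencetheorem}), the starting height $z_0$ may be perturbed to ensure $\xi_0(t)$ and $\xi_1(t)$ are points of strict monotonicity of $\widehat{h}$, at which the profile curve has distinct one-sided tangent directions and the creases are honest.
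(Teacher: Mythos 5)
Your proposal is correct and follows essentially the same route as the paper's proof: the whole argument rests on the observation that the reflection $\widehat{h}\mapsto 2z_0+Z(t)-\widehat{h}$ leaves $(\partial h_c/\partial\xi)^2$ unchanged, so the induced metric $f^2({\rm d}\xi^2+{\rm d}\theta^2)$ and hence the local isometries of Theorem~\ref{periodicinitialcurve} carry over to each smooth piece of $S_c(t)$. You are in fact more careful than the paper on the routine points (existence of $\xi_0(t),\xi_1(t)$, continuity at both seams, and whether the creases are genuine when $\widehat{h}_\xi$ vanishes there), but the underlying idea is identical.
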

\begin{proof}
The periodicity of $f(\xi,t)$ with period $Q$, and the property $\widehat{h}(\xi+Q,t)=\widehat{h}(\xi,t)+Z(t)$ from Theorem~\ref{periodicinitialcurve}, imply that for all $t \geq 0$, $S_c(t)$ is  generated as a surface of revolution by a closed curve with singularities only at points corresponding to $\xi_0(t)$ and and $\xi_1(t)$. That the smooth parts of $S_c(t)$ are local extrinsic representations of the Ricci flow follows from Theorem~\ref{periodicinitialcurve} as well.  Since the metric $g(t)$ on $S_c(t)$ depends on $\left(\partial h_c / \partial \xi  \right)^2$, the metric will be the same for $S_c(t)$ as for $\widehat{S}(t)$ at corresponding points.  Also, the metric can be extended smoothly to the creases by that fact as well, by \eqref{isothermalmetricflow}.
\end{proof}

\begin{remark}
In the proof of Theorem~\ref{creasetheorem}, $\xi_0(t)$ or $\xi_1(t)$ may not be uniquely determined if $\widehat{h}(\xi,t)$ is not strictly monotone as a function of $\xi$ when $\widehat{h}(\xi,t)=z_0$ or $\widehat{h}(\xi,t)=z_0+Z(t)/2$, but the surfaces $S_c(t)$ constructed in this theorem are independent of any such choices.
\end{remark}

\begin{remark}
The initial surface of the flow $S_c(t)$ constructed in Theorem~\ref{creasetheorem} may not be the same as the original toroidal surface $S_0$, but it is globally isometric to that surface.  If the height function $h_0$ of the original toroidal surface $S_0$ has only two local extrema, and if they correspond to the choice of $\xi_0(0)$ and the half-way point $\xi_1(0)$, then $\left.S_c(t)\right|_{t=0}$ will be identical to $S_0$ up to an ambient motion.  If there are other local extrema of the height, then it is possible to ``unwind'' $\widehat{h}$ at these points as well to re-construct the original surface.

In the case of a standard embedded torus of revolution generated by revolving a circle around the axis, when $\xi_0(0)$ and $\xi_1(0)$ are taken to correspond to the minimum and maximum heights of the circle, the family $S_c(t)$ constructed by this theorem will correspond, at $t=0$, to the original torus, and for any $t>0$ there will be two singular creases at the top and bottom of $S_c(t)$; see Figure 2, where the creases are indicated by the large bold faced dots.  As $t\to\infty$, $S_c(t)$ approaches a double-covered circular cylinder whose height is half of the limiting $z$-period of the corresponding cover $\widehat{S}(t)$ of the torus, or $R_{\infty}Q/2$.
\end{remark}

\begin{figure}[ht!]
\centerline{
\includegraphics[height=2in]{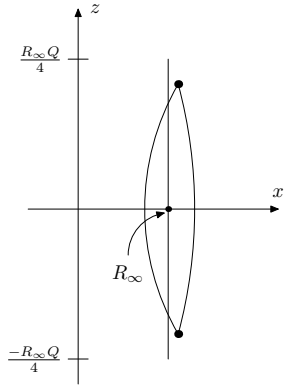}
}
\caption{The compact evolution of the standard torus}
\end{figure}

\section{Questions}

An extrinsic representation exists for the unique Ricci flow initialized by any smoothly immersed surface of revolution that is compact.  An extrinsic representation exists for at least one Ricci flow initialized by any smoothly immersed non-compact but complete surface of revolution whose profile curve satisfies certain regularity conditions and maintains a distance from the axis of revolution that is both bounded and bounded away from zero.  Each of these extrinsic representations that we have identified exists for the entire flow.  But many questions remain.

Do extrinsic representations in $\mathbb{R}^3$ exist for Ricci flows initialized by other smoothly immersed non-compact but complete surfaces of revolution, such as unbounded surfaces of revolution having only one pole?  For any non-negative initial function $u_0 \in L^1_{\rm loc}(\mathbb{R})$, the initial value problem \eqref{intrinsiclogdiffusion} has a multitude of solutions $u(\xi,t) \in C^{\infty}(\mathbb{R} \times (0,\infty))$~\cite{Rodriguez1995}.  A Ricci flow initialized by a given smoothly immersed non-compact but complete surface of revolution has an extrinsic representation in $\mathbb{R}^3$ if and only if there exists a corresponding solution of the initial value problem \eqref{intrinsiclogdiffusion} determined by the initial surface which satisfies conditions~(1) and~(2) of Theorem~\ref{representationtheorem}.   Condition~(1) is a natural condition fixing the flux associated with the logarithmic diffusion equation at ``$\pm \infty$''.  Condition~(2) is not unreasonable because, letting $w = \log u(\xi,t)$ where $u(\xi,t) = f(\xi,t)^2$, and letting $\phi = \dfrac{f_{\xi}}{f}$, it is easy to see that $\phi(\xi,t)$ satisfies the parabolic equation
$$ \phi_t = \left({\rm e}^{-w}\right) \phi_{\xi \xi} - \left({\rm e}^{-w}\right) \phi \phi_{\xi}. $$
Since $\sup_{\xi \in \mathbb{R}} \left| \phi(\xi,0) \right| = \sup_{\xi \in \mathbb{R}} \left| \dfrac{f_0^{\prime}(\xi)}{f_0(\xi)} \right| \leq 1$ by the immersability of the initial surface $S_0$,  condition~(2) will hold if an appropriate maximum principle holds for this parabolic equation.  It would be of great interest to know whether or not these two conditions can be satisfied for any Ricci flow initialized by any smoothly immersed non-compact but complete surface of revolution, and if not, to be able to identify specific counterexamples.  If the Ricci flow initialized by an immersed surface of revolution is not unique, is it possible for some of the flows to have extrinsic representations in $\mathbb{R}^3$ while others do not?  Can a Ricci flow that exists for times $0 < t < T_1$ have an extrinsic representation in $\mathbb{R}^3$ only for times $0 < t < T_2$ where $T_2 < T_1$?

Finally, we point out that while our discussion has been from an extrinsic point of view, it raises some closely related intrinsic issues.  For example, in order for a Ricci flow initialized by an abstract surface of revolution $(S, g_0)$ to have an extrinsic representation in $\mathbb{R}^3$, $(S, g_0)$ must be smoothly embeddable in $\mathbb{R}^3$.  When does such an embedding exist?  This issue has been addressed both classically and with modern tools in the case that $S$ is a sphere (see for example M.\ Engman~\cite{Engman1988, Engman2006}), and more recently has been addressed in the case that $S$ is a torus (see Q.\ Han and F.\ Lin~\cite{Han2008}).  We also note that the close relationship between the scalar logarithmic diffusion equation in one space variable and Ricci flow on abstract surfaces of revolution has been explored from an intrinsic point of view, in the context of Ricci flows on the plane initialized by radially symmetric metrics, by J.\ L.\ V{\'a}zquez, J.\ R.\ Esteban, and A.\ Rodr{\'{\i}}guez~\cite{Vazquez1996}.

\subsection*{Acknowledgments}
The authors wish to thank Linghai Zhang for his help pointing the way to the solution of the  differential equation for the Ricci flow of a surface of revolution, and we wish to thank Huai-Dong Cao for his many helpful discussions about these results.

\bibliographystyle{amsplain}

\bibliography{bibliographyfileforCollDoddJohnson}

\bigskip

\noindent Vincent Coll, Department of Mathematics, Lehigh University, 27 Memorial Drive West, Bethlehem PA 18015
(\verb+vecjr@iconcepts-inc.com+)

\bigskip

\noindent Jeff Dodd, Mathematical, Computing and Information Sciences Department, Jacksonville State University,
700 Pelham Road North, Jacksonville AL 36265 (\verb+jdodd@jsu.edu+)

\bigskip

\noindent David L.\ Johnson, Department of Mathematics, Lehigh University, 27 Memorial Drive West, Bethlehem PA 18015
(\verb+david.johnson@lehigh.edu+)

\end{document}